\documentclass[pdflatex,sn-mathphys-num]{sn-jnl}

\usepackage{graphicx}%
\usepackage{multirow}%
\usepackage{amsmath,amssymb,amsfonts}%
\usepackage{amsthm}%
\usepackage{mathrsfs}%
\usepackage[title]{appendix}%
\usepackage{xcolor}%
\usepackage{textcomp}%
\usepackage{manyfoot}%
\usepackage{booktabs}%
\usepackage{algorithm}%
\usepackage{algorithmicx}%
\usepackage{algpseudocode}%
\usepackage{listings}%
\usepackage{array}

\usepackage{tikz}
\usetikzlibrary{
  arrows.meta,
  calc,
  positioning,
  decorations.pathreplacing,
  patterns,
  fit,
  backgrounds,
  intersections,
  shapes.geometric  
}

\hypersetup{
  colorlinks=true,
  linkcolor=blue!60!black,
  citecolor=blue!60!black,
  urlcolor=blue!60!black
}

\newtheorem{theorem}{Theorem}[section]

\newtheorem{corollary}[theorem]{Corollary}
\theoremstyle{definition}
\newtheorem{definition}[theorem]{Definition}
\newtheorem{remark}[theorem]{Remark}

\newtheorem{solution*}{Solution}[section]
\newcommand{\R}{\mathbb{R}}
\newcommand{\argmin}{\operatorname*{arg\,min}}
\newcommand{\dist}{\operatorname{dist}}

\newcommand{\cF}{\mathcal{F}}
\newcommand{\cS}{\mathcal{S}}

\newcommand{\norm}[1]{\left\lVert #1 \right\rVert}
\newcommand{\abs}[1]{\left\lvert #1 \right\rvert}
\newcommand{\set}[1]{\left\{ #1 \right\}}

\newcommand{\ip}[2]{\left\langle #1,#2\right\rangle}
\newcommand{\pos}[1]{\left(#1\right)_{+}}

\title{Introduction to Exact Penalization for Mathematical Programming with Equilibrium Constraints}
\author{Louis Shuo Wang\thanks{Email: wang.s41@northeastern.edu}}
\affil{Department of Mathematics, Northeastern University, Boston, 02115, MA, USA}

\abstract{
We present a focused introduction to exact penalty methods for nonlinear programs and mathematical programs with equilibrium constraints (MPECs), emphasizing their connection to modern error bound theory. The goal is twofold. First, we explain how classical optimality conditions can be interpreted through exact penalization, and why such results typically rely on constraint regularity conditions that can be understood as error bounds on perturbations of feasible sets. We then highlight how recent developments based on subanalytic geometry and Łojasiewicz-type inequalities extend this framework beyond classical regularity assumptions, enabling exact penalization under broader analytic conditions.

Second, we demonstrate how this theory can be applied in practice to MPECs by reformulating them via KKT systems and constructing exact penalty functions based on residual mappings. Particular attention is given to fractional-order penalties arising from Łojasiewicz error bounds, as well as to improved formulations for special problem classes where sharper exponents can be obtained. These developments provide both theoretical insight and practical guidance for analyzing and solving challenging constrained optimization problems.}

\begin{document}

\maketitle

\section{Introduction and motivation}

A mathematical program with equilibrium constraints (MPEC) is an optimization problem in which feasibility is partly defined by the solution set of a lower-level equilibrium system, often a variational inequality (VI), complementarity system, or generalized equation. MPECs are difficult for at least two structural reasons:

\begin{enumerate}[label=\arabic*.]
    \item the feasible set is defined \emph{implicitly} through an equilibrium problem;
    \item natural one-level reformulations introduce \emph{complementarity constraints}, which destroy standard constraint qualifications and invalidate much of classical nonlinear programming theory.
\end{enumerate}

A central idea in exact penalization is to replace hard equilibrium constraints by a residual term added to the objective. If the penalty is \emph{exact}, then minimizers of the penalized problem coincide with minimizers of the original constrained problem for all sufficiently large penalty parameters. In classical nonlinear programming this is often achieved with an $\ell_1$-type penalty under a regularity condition. For MPECs, however, the correct penalty may involve a \emph{fractional power} of a residual. This reflects a weaker error bound and is one of the main conceptual messages of the subject.

These notes develop the following chain of ideas:

\[
\text{exact penalty}
\Longleftrightarrow
\text{error bound}
\Longleftrightarrow
\text{metric control of feasibility violation}.
\]

The modern viewpoint relevant here is:
\begin{enumerate}[label=(\roman*)]
    \item represent the MPEC as a standard constrained problem via KKT conditions for the lower-level equilibrium system;
    \item define a residual for those KKT conditions;
    \item invoke a Łojasiewicz-type error bound on a subanalytic set;
    \item deduce exact penalization, often with exponent $\gamma \in (0,1]$.
\end{enumerate}

\section{The MPEC model}
We work with a prototypical MPEC of the form
\begin{equation}
\label{eq:mpec}
\begin{aligned}
    \min_{x,y}\quad & f(x,y) \\
    \text{s.t.}\quad & (x,y)\in Z,\\
    & y \in \cS(x),
\end{aligned}
\end{equation}
where
\begin{enumerate}[label=(\roman*)]
    \item $x\in\R^n$ is the upper-level decision variable,
    \item $y\in\R^m$ is the lower-level or equilibrium variable,
    \item $Z\subseteq \R^n\times\R^m$ is a given constraint set,
    \item $\cS(x)$ is the solution map of a parametric equilibrium problem.
\end{enumerate}

A standard example is a parameterized VI:
\begin{equation}
\label{eq:vi}
\text{find } y\in C(x)\ \text{ such that }\ \ip{F(x,y)}{v-y}\ge 0\quad \forall v\in C(x),
\end{equation}
where
\[
C(x) := \set{y\in\R^m : g_i(x,y)\le 0,\ i=1,\dots,\ell }.
\]

If the lower-level problem admits a KKT representation, then \eqref{eq:vi} can be expressed using a multiplier vector $\lambda\in\R^\ell_+$:
\begin{equation}
\label{eq:kkt-vi}
F(x,y)+\sum_{i=1}^\ell \lambda_i \nabla_y g_i(x,y)=0,\qquad
g_i(x,y)\le 0,\qquad
\lambda_i g_i(x,y)=0,\qquad
\lambda_i\ge 0.
\end{equation}

This converts \eqref{eq:mpec} into a one-level constrained problem in variables $(x,y,\lambda)$.

\section{Residual functions and the exact-penalty principle}

\subsection{Residuals for ordinary constrained optimization}

Consider the nonlinear program
\begin{equation}
\label{eq:nlp}
\begin{aligned}
    \min_x\quad & \theta(x)\\
    \text{s.t.}\quad & x\in X,\\
    & g(x)\le 0,\qquad h(x)=0,
\end{aligned}
\end{equation}
where $X\subseteq \R^n$, $g:\R^n\to\R^p$, $h:\R^n\to\R^q$, and $\theta:\R^n\to\R$.

A natural residual for the functional constraints is
\begin{equation}
\label{eq:nlp-residual}
r(x):=\sum_{i=1}^p \pos{g_i(x)}+\sum_{j=1}^q \abs{h_j(x)}.
\end{equation}
Then $r(x)\ge 0$ and
\[
r(x)=0 \iff x \text{ is feasible for } \eqref{eq:nlp}.
\]

\begin{definition}[Exact penalty equivalent]
A penalized problem
\[
\min_{x\in X}\ \theta(x)+\alpha \psi(x)
\]
is called an \emph{exact penalty equivalent} of \eqref{eq:nlp} if, for all sufficiently large $\alpha$, its set of global minimizers coincides with the set of global minimizers of \eqref{eq:nlp}.
\end{definition}

The crucial issue is not merely that $\psi(x)=0$ on the feasible set, but that $\psi$ must dominate the distance to feasibility in a suitable quantitative way.

\subsection{Metric idea}

Let
\[
W:=\set{x\in X: g(x)\le 0,\ h(x)=0}.
\]
Suppose $\theta$ is Lipschitz on $X$ with modulus $L$, and suppose there exists a nonnegative function $\psi$ such that
\[
\dist(x,W)\le c\,\psi(x)\qquad \forall x\in X.
\]
Then one expects
\[
\theta(x)+\alpha \psi(x)
\]
to become exact once $\alpha > cL$ (up to a factor relating global bounds on $\psi$). Thus:

\begin{center}
\emph{Exact penalization is driven by error bounds.}
\end{center}

\section{Subanalyticity and Łojasiewicz-type inequalities}

The theory in this topic is substantially broader than smooth nonlinear programming. The right ambient class is often the class of \emph{subanalytic} sets and functions.

\begin{definition}[Analytic function]
A function $\varphi:U\to\R$ on an open set $U\subseteq \R^n$ is analytic if it admits a convergent power-series expansion in a neighborhood of each point of $U$.
\end{definition}

\begin{definition}[Semianalytic and subanalytic sets]
A set $X\subseteq \R^n$ is \emph{semianalytic} if locally it can be described by finitely many analytic equalities and inequalities. A set is \emph{subanalytic} if locally it is the projection of a bounded semianalytic set in a higher-dimensional space.
\end{definition}

Subanalyticity is large enough to include many piecewise-analytic constructions and many nonsmooth residuals arising from maxima, absolute values, and finite-dimensional equilibrium reformulations.

The key analytical tool is a Łojasiewicz-type inequality.

\begin{theorem}[Łojasiewicz-type comparison inequality]
\label{thm:loja}
Let $S\subseteq \R^n$ be compact and subanalytic, and let $\phi,\psi:S\to\R$ be continuous and subanalytic. If
\[
\phi^{-1}(0)\subseteq \psi^{-1}(0),
\]
then there exist $\rho>0$ and an integer $N^\ast\ge 1$ such that
\[
\rho\,\abs{\psi(x)}^{N^\ast}\le \abs{\phi(x)}\qquad \forall x\in S.
\]
\end{theorem}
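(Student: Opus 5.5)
The plan is to reduce the inequality to a statement about a single subanalytic function of one real variable, and then apply the Puiseux-type structure of such functions near $0$.

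Define, for $t\ge 0$,
\[
m(t):=\min\set{\abs{\phi(x)} : x\in S,\ \abs{\psi(x)}\ge t},
\]
with $m(t):=+\infty$ when the constraint set is empty. Since $S$ is compact and $\phi,\psi$ are continuous, the minimum is attained whenever the set is nonempty. For $t>0$ we have $m(t)>0$. Indeed, if $m(t)=0$, some $x$ would satisfy $\phi(x)=0$ but $\abs{\psi(x)}\ge t>0$, which contradicts $\phi^{-1}(0)\subseteq\psi^{-1}(0)$. Moreover $m$ is nondecreasing in $t$.

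The first key step is to show that $m$ restricted to $(0,T]$, with $T:=\max_S\abs{\psi}$, is a subanalytic function of one variable. Its graph is obtained from the subanalytic set
\[
\set{(x,t,s): x\in S,\ \abs{\psi(x)}\ge t,\ s=\abs{\phi(x)}}
\]
by projection and by taking a fiberwise infimum. For this I will use the standard facts that subanalytic sets are stable under proper projections (here everything is bounded because $S$ is compact) and under complements. The latter is Gabrielov's theorem, which is needed to express ``$s$ is the infimum over the fiber''. Everything stays relatively compact, so the projections are legitimate. The case $T=0$ is trivial, since then $\psi\equiv 0$ on $S$.

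The second step uses the structure of one-variable subanalytic functions. Near $t=0^+$, such a function admits a convergent Puiseux expansion $m(t)=a\,t^{q}+\text{higher order}$ with $a\neq 0$ and $q\in\mathbb{Q}$, unless it vanishes identically near $0$, which is excluded because $m>0$ on $(0,T]$. Since $m$ is bounded on $(0,T]$ (by $\max_S\abs{\phi}$), we get $q\ge 0$ and $a>0$. Choose an integer $N^\ast\ge\max(q,1)$. Then there is $\delta>0$ with $m(t)\ge \tfrac a2 t^{q}\ge \tfrac a2\, T^{q-N^\ast} t^{N^\ast}$ for $0<t<\delta$. On $[\delta,T]$, monotonicity gives $m(t)\ge m(\delta)>0$, so $m(t)\ge m(\delta)T^{-N^\ast}t^{N^\ast}$ there. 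Let $\rho$ be the smaller of these two constants.

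Finally, for any $x\in S$ with $\psi(x)\ne0$, set $t=\abs{\psi(x)}$. By definition of $m$, $\abs{\phi(x)}\ge m(t)\ge\rho\abs{\psi(x)}^{N^\ast}$. If $\psi(x)=0$ the inequality is trivial.

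I expect the main obstacle to be the first step. Establishing that the marginal function $m$ is subanalytic needs the complement theorem, and this is where the compactness hypothesis is essential. I will cite it as a known fact from subanalytic geometry (Gabrielov, or Bierstone–Milman) rather than prove it. The Puiseux expansion of one-variable subanalytic functions will likewise be quoted.
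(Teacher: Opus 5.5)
The paper does not prove Theorem~\ref{thm:loja}. It quotes it as a black box, the \L{}ojasiewicz inequality of subanalytic geometry, and uses only the specialization \eqref{eq:generic-error-bound}, so there is no in-paper route to compare against.

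Your argument is correct and is essentially the classical proof from the subanalytic-geometry literature. You pass to the marginal function $m(t)=\min\set{\abs{\phi(x)} : x\in S,\ \abs{\psi(x)}\ge t}$, show it is a bounded, positive, one-variable subanalytic function on $(0,T]$, and read the exponent off its Puiseux expansion at $0^+$. Each step checks out:
\begin{itemize}
\item Positivity of $m$ on $(0,T]$ follows from $\phi^{-1}(0)\subseteq\psi^{-1}(0)$ together with attainment of the minimum on the compact set $S\cap\set{\abs{\psi}\ge t}$.
\item Subanalyticity of the graph of $m$ does need the complement theorem, to express that $s$ is the least element of its fiber. You keep every set bounded, so the projections are admissible.
\item Boundedness and positivity of $m$ force $q\ge 0$ and $a>0$.
\item The comparison $t^{q}\ge T^{q-N^\ast}t^{N^\ast}$ for $0<t\le T$ holds because $(t/T)^{N^\ast-q}\le 1$.
\end{itemize}

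What your reduction adds to the paper's bare citation is transparency. It shows that $N^\ast$ can be any integer at least the leading Puiseux exponent $q$ of $m$, and the proof gives no handle on $q$. That is exactly why, as the paper later stresses, $N^\ast$ is not computable from the general theory.

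Using the superlevel constraint $\abs{\psi(x)}\ge t$ rather than the level set $\abs{\psi(x)}=t$ is a good choice. It makes $m$ nondecreasing, so the range $[\delta,T]$ is covered by the single constant $m(\delta)$ without a separate compactness argument.
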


A particularly important specialization is obtained by taking
\[
\phi(x)=r(x),\qquad \psi(x)=\dist(x,W),
\]
which yields an error bound of the form
\begin{equation}
\label{eq:generic-error-bound}
\rho\,\dist(x,W)\le r(x)^{1/N^\ast}.
\end{equation}

Thus, a fractional-power penalty appears naturally from the geometry of subanalytic feasibility sets.

\section{General exact penalty theorem}

We now state the basic template.

\begin{theorem}[General exact penalty theorem]
\label{thm:general-exact}
Consider \eqref{eq:nlp}. Assume:
\begin{enumerate}[label=(\roman*)]
    \item $X\subseteq \R^n$ is compact and subanalytic;
    \item $\theta$ is Lipschitz continuous on $X$;
    \item each $g_i$ and $h_j$ is continuous and subanalytic;
    \item the feasible set $W$ is nonempty.
\end{enumerate}
Then there exist $\alpha^\ast>0$ and an integer $N^\ast\ge 1$ such that for every $\alpha\ge \alpha^\ast$ and every $N\ge N^\ast$,
\[
\argmin_{x\in W}\theta(x)
=
\argmin_{x\in X}\Bigl(\theta(x)+\alpha r(x)^{1/N}\Bigr),
\]
where $r$ is the residual \eqref{eq:nlp-residual}.
\end{theorem}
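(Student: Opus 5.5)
The plan is to derive a uniform metric error bound $\dist(x,W)\le c\,r(x)^{1/N}$ on $X$ from Theorem~\ref{thm:loja}, and then run the Lipschitz argument sketched in the ``metric idea'' subsection.

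\textbf{Step 1 (regularity of the data).} Put $S:=X$, $\phi:=r|_X$ and $\psi:=\dist(\cdot,W)|_X$. Each $g_i,h_j$ is continuous and subanalytic, and $t\mapsto\max(t,0)$ and $t\mapsto|t|$ are semialgebraic. Hence $\pos{g_i}$ and $\abs{h_j}$ are continuous and subanalytic, and so is their finite sum $r$ on the compact set $X$. Compactness matters here, because it lets us use closure of subanalytic functions under sums and compositions for \emph{bounded} graphs. The set $W=X\cap\{g\le 0,\ h=0\}$ is closed and subanalytic, and it is nonempty by (iv). The distance function to a closed subanalytic set is continuous and subanalytic on compact sets, so $\psi$ qualifies. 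Finally, since $W\subseteq X$ is closed, for $x\in X$ we have $\phi(x)=0\iff x\in W\iff\psi(x)=0$. This gives $\phi^{-1}(0)\subseteq\psi^{-1}(0)$. This step is where I expect the main technical care to be needed: the paper only lists informal closure properties, so I would explicitly cite the standard facts that subanalytic functions with bounded graphs are closed under sums and compositions with semialgebraic maps, and that the distance to a subanalytic set is subanalytic.

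\textbf{Step 2 (error bound, uniform in $N$).} Theorem~\ref{thm:loja} yields $\rho>0$ and an integer $N^\ast\ge1$ with $\rho\,\dist(x,W)^{N^\ast}\le r(x)$ on $X$. Equivalently, $\dist(x,W)\le\rho^{-1/N^\ast}r(x)^{1/N^\ast}$. Let $R:=\max(1,\max_X r)$, which is finite by continuity and compactness. For $N\ge N^\ast$ we have $0\le 1/N^\ast-1/N\le 1/N^\ast$. Therefore
\[
r(x)^{1/N^\ast}=r(x)^{1/N}\,r(x)^{1/N^\ast-1/N}\le R^{1/N^\ast}\,r(x)^{1/N},
\]
where the inequality uses $r\le R$ and $R\ge1$. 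Hence $\dist(x,W)\le c\,r(x)^{1/N}$ with $c:=(R/\rho)^{1/N^\ast}$, and this constant is independent of $N\ge N^\ast$.

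\textbf{Step 3 (exactness).} Let $L$ be the Lipschitz modulus of $\theta$ on $X$, and set $\alpha^\ast:=Lc+1$. Fix $\alpha\ge\alpha^\ast$ and $N\ge N^\ast$. For $x\in X$, choose a nearest point $w\in W$; it exists because $W$ is compact, and $w\in X$. Then
\[
\theta(w)\le\theta(x)+L\,\dist(x,W)\le\theta(x)+Lc\,r(x)^{1/N}.
\]
Consequently, if $x\notin W$, then $r(x)>0$ and $\theta(x)+\alpha r(x)^{1/N}>\theta(w)\ge\min_W\theta$. On $W$ the penalized function equals $\theta$. It follows that $\min_X(\theta+\alpha r^{1/N})=\min_W\theta$, which is attained because $\theta$ is continuous and $W$ is compact. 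Every minimizer of the penalized problem lies in $W$ and is therefore a minimizer of $\theta$ over $W$. Conversely, every minimizer of $\theta$ over $W$ attains the common minimum value of the penalized problem. The two argmin sets therefore coincide, which proves the theorem.
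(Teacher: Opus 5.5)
Your proof is correct and follows the same route as the paper's sketch: a Łojasiewicz error bound $\dist(x,W)\lesssim r(x)^{1/N^\ast}$ on $X$, followed by the nearest-point Lipschitz comparison $\theta(w)\le\theta(x)+L\,\dist(x,W)$. Your Step 2 and your two-sided argmin argument also supply details the sketch leaves implicit. In Step 2, the factor $R=\max(1,\max_X r)$ makes the constant uniform over all $N\ge N^\ast$, whereas the paper only treats $N=N^\ast$ and absorbs the $N^\ast$-th root into $\rho$.
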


\begin{proof}[Proof sketch]
Let
\[
\psi(x):=\dist(x,W).
\]
By subanalyticity and compactness, Theorem~\ref{thm:loja} implies that
\[
\rho\,\psi(x)\le r(x)^{1/N^\ast}\qquad \forall x\in X
\]
for some $\rho>0$. If $z\in W$ is a nearest feasible point to $x$, then Lipschitz continuity gives
\[
\theta(x)\ge \theta(z)-L\norm{x-z}=\theta(z)-L\psi(x).
\]
Hence
\[
\theta(x)+\alpha r(x)^{1/N^\ast}
\ge
\theta(z)+(\alpha\rho-L)\psi(x).
\]
Thus whenever $\alpha\rho>L$, the penalized objective cannot improve on a feasible comparator unless $\psi(x)=0$, i.e., unless $x\in W$. This proves exactness.
\end{proof}

\begin{remark}
The important point is not the precise exponent $1/N^\ast$, but the mechanism:
\[
\text{error bound} \Longrightarrow \text{exact penalty}.
\]
When classical regularity is strong enough, one may obtain $N^\ast=1$. In degenerate settings, only a weaker fractional exponent may be available.
\end{remark}

\section{Local exact penalties}

The same logic localizes. If $x^\ast\in W$ is a local minimizer of \eqref{eq:nlp}, then on a sufficiently small neighborhood $B(x^\ast,\varepsilon)$ the restricted problem is a global minimization problem. Applying the previous theorem locally yields:

\begin{corollary}[Local exact penalty characterization]
\label{cor:local}
Assume $X$ is closed and subanalytic, each $g_i,h_j$ is continuous and subanalytic, and $\theta$ is Lipschitz near a feasible point $x^\ast\in W$. Then $x^\ast$ is a local minimizer of \eqref{eq:nlp} if and only if, locally on $X$, it minimizes
\[
\theta(x)+r(x)^{1/N}
\]
for all sufficiently large integers $N$.
\end{corollary}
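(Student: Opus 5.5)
The plan is to localize and then apply Theorem~\ref{thm:general-exact} (more precisely, its proof mechanism) on a compact neighborhood. The two directions are proved separately.

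\emph{Sufficiency.} Suppose $x^\ast$ locally minimizes $\theta+r^{1/N}$ on $X\cap B(x^\ast,\varepsilon)$ for some large $N$. For feasible $x$ near $x^\ast$ we have $r(x)=0$, and $r(x^\ast)=0$. Hence
\[
\theta(x^\ast)=\theta(x^\ast)+r(x^\ast)^{1/N}\le \theta(x)+r(x)^{1/N}=\theta(x),
\]
so $x^\ast$ is a local minimizer of \eqref{eq:nlp}.

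\emph{Necessity.} Let $x^\ast$ minimize $\theta$ over $W\cap \bar B(x^\ast,\varepsilon)$, and shrink $\varepsilon$ so that $\theta$ is $L$-Lipschitz on $\bar B(x^\ast,\varepsilon)$. Set $K:=X\cap\bar B(x^\ast,\varepsilon)$. Since $X$ is closed and subanalytic and balls are semianalytic, $K$ is compact and subanalytic. Note that the penalty coefficient is fixed at $1$, so the large parameter must come from $N$. I will not use $\dist(\cdot,W)$ itself, because the nearest point of $W$ to some $x\in K$ may lie outside the ball. Instead I work with $\psi(x):=\dist(x,W\cap \bar B(x^\ast,\varepsilon))$ on $K$. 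This is continuous and subanalytic, since the distance to a subanalytic set is subanalytic, and its zero set equals $r^{-1}(0)\cap K$. Theorem~\ref{thm:loja} then gives
\[
\rho\,\psi(x)^{N^\ast}\le r(x)\qquad\text{for all } x\in K.
\]
Shrinking to the smaller ball $\bar B(x^\ast,\varepsilon/2)$, where $r\le 1$, and taking $N\ge N^\ast$, we get
\[
r^{1/N}\ge \rho^{1/N}\psi^{N^\ast/N},\qquad \psi^{N^\ast/N}\ge \psi\,\delta^{N^\ast/N-1},
\]
where $\delta:=\operatorname{diam}(K)\ge\psi$.

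Now fix $x\in K$ near $x^\ast$ and let $z\in W\cap\bar B(x^\ast,\varepsilon)$ be a nearest point. Then
\[
\theta(x)+r(x)^{1/N}\ge \theta(z)-L\psi(x)+c_N\psi(x)\ge \theta(x^\ast)+(c_N-L)\psi(x),
\]
with $c_N=\rho^{1/N}\delta^{N^\ast/N-1}$. The main obstacle is ensuring $c_N>L$. As $N\to\infty$, $c_N\to \delta^{-1}$, which may be small compared to $L$. My first attempt is to normalize $\delta$: choose $\varepsilon$ small so that $\delta<1/(2L)$. This fails if $\rho$ depends on $\varepsilon$, but the limit $c_N\to\delta^{-1}$ is independent of $\rho$. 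Since $\delta\le 2\varepsilon$, choosing $\varepsilon<1/(4L)$ first and then $N$ large gives $c_N>L$ for all sufficiently large $N$. The right inequality $\psi^{s}\ge \psi\,\delta^{s-1}$ for $s\in(0,1]$ and $\psi\le\delta$ is elementary. This yields local minimality of $x^\ast$ for $\theta+r^{1/N}$ for every large $N$.
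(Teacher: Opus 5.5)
Your argument is correct, but it is not literally the paper's route. The paper justifies the corollary only by the one-line remark preceding it. That remark restricts to $X\cap B(x^\ast,\varepsilon)$, where the local minimizer becomes a global one, and then applies Theorem~\ref{thm:general-exact} as a black box.

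Read literally, that yields local minimality of $\theta+\alpha r^{1/N}$ only for $\alpha\ge\alpha^\ast$. Nothing forces $\alpha^\ast\le 1$, since it is of order $L/\rho$, whereas the corollary has coefficient $1$.

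You instead rerun the \L{}ojasiewicz--Lipschitz estimate directly on the localized set, with $\psi=\dist(\cdot,W\cap\bar B(x^\ast,\varepsilon))$. This is exactly the distance the theorem would use for the localized problem. You then absorb the coefficient into $N$. Because $c_N=\rho^{1/N}\delta^{N^\ast/N-1}\to\delta^{-1}$, and $\delta\le 2\varepsilon$ is made smaller than $1/L$ before $K,\rho,N^\ast$ are fixed, you get $c_N>L$ for all large $N$ on a neighborhood that does not depend on $N$.

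So your proof establishes the statement as written, in a slightly stronger uniform form. The paper's route is shorter but leaves the coefficient issue implicit. An alternative fix is to take any fixed $N>N^\ast$ and shrink the neighborhood until $\psi\le(\rho^{1/N}/L)^{N/(N-N^\ast)}$, at the price of $N$-dependent neighborhoods.

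Your direct power computation also avoids the step $r^{1/N}\ge r^{1/N^\ast}$ implicit in the theorem's sketch, which needs $r\le 1$. In your own proof, the claim that $r\le 1$ on $\bar B(x^\ast,\varepsilon/2)$ is neither automatic for a pre-chosen $\varepsilon$ nor needed. The bound $r^{1/N}\ge\rho^{1/N}\psi^{N^\ast/N}$ follows from monotonicity of $t\mapsto t^{1/N}$ alone.

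Your sufficiency direction is also more direct than reading it off the argmin identity, and it holds for every $N$.
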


A useful message for research students is that local and global exact penalization are structurally similar, but the global version needs compactness or some coercivity substitute, whereas the local version only needs control on a neighborhood.

\section{From equilibrium constraints to KKT residuals}

We now specialize to the MPEC \eqref{eq:mpec}. Assume the lower-level feasible set is
\[
C(x)=\set{y\in\R^m : g_i(x,y)\le 0,\ i=1,\dots,\ell},
\]
and the lower-level equilibrium is expressed by a VI with mapping $F(x,y)$.

Under suitable assumptions, the VI admits the KKT representation
\[
F(x,y)+\sum_{i=1}^\ell \lambda_i \nabla_y g_i(x,y)=0,\qquad
g_i(x,y)\le 0,\qquad
\lambda_i\ge 0,\qquad
\lambda_i g_i(x,y)=0.
\]
Then the MPEC can be written as
\[
\min_{x,y,\lambda}\ f(x,y)
\]
subject to $(x,y,\lambda)$ belonging to a tractable set and satisfying the KKT equalities/inequalities.

This motivates the KKT residual
\begin{equation}
\label{eq:mpec-residual}
r(x,y,\lambda):=
\norm{F(x,y)+\sum_{i=1}^{\ell}\lambda_i \nabla_y g_i(x,y)}
+\sum_{i=1}^{\ell}\Bigl(\pos{g_i(x,y)}+\lambda_i\abs{g_i(x,y)}\Bigr).
\end{equation}

The first term measures stationarity failure for the lower-level VI; the second term measures feasibility and complementarity failure.

\section{Exact penalty theorem for MPEC}

We now state the main MPEC-level result in a clean lecture-note form.

\begin{theorem}[Exact penalization for MPEC via KKT residual]
\label{thm:mpec-exact}
Assume:
\begin{enumerate}[label=(\roman*)]
    \item $Z\subseteq \R^n\times\R^m$ is compact and subanalytic;
    \item $f$ is Lipschitz continuous on $Z$;
    \item $F$ is continuous and subanalytic;
    \item each $g_i$ is continuous and subanalytic, and $y\mapsto g_i(x,y)$ is convex for each fixed $x$;
    \item each $\nabla_y g_i$ exists and is continuous and subanalytic on a neighborhood of the feasible set;
    \item the lower-level VI is equivalent to its KKT system on the feasible set, with multipliers bounded in a ball $B(0,c)\cap \R^\ell_+$.
\end{enumerate}
Then there exist $\alpha^\ast>0$ and an integer $N^\ast\ge 1$ such that for all $\alpha\ge \alpha^\ast$ and $N\ge N^\ast$,
\[
(x^\ast,y^\ast)\text{ solves the MPEC}
\]
if and only if there exists $\lambda^\ast\in \R^\ell_+$ such that $(x^\ast,y^\ast,\lambda^\ast)$ solves
\begin{equation}
\label{eq:mpec-penalty}
\begin{aligned}
    \min_{x,y,\lambda}\quad & f(x,y)+\alpha\,r(x,y,\lambda)^{1/N}\\
    \text{s.t.}\quad & (x,y,\lambda)\in Z\times\bigl(B(0,c)\cap \R^\ell_+\bigr),
\end{aligned}
\end{equation}
where $r$ is given by \eqref{eq:mpec-residual}.
\end{theorem}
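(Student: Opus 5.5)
The plan is to reduce Theorem~\ref{thm:mpec-exact} to Theorem~\ref{thm:general-exact}, applied in the lifted variable $w=(x,y,\lambda)$. Take the compact set
\[
X:=Z\times\bigl(B(0,c)\cap\R^\ell_+\bigr)\subseteq\R^{n+m+\ell}
\]
and the objective $\theta(x,y,\lambda):=f(x,y)$. Let $W$ be the KKT-feasible set
\[
W:=\set{(x,y,\lambda)\in X:\ r(x,y,\lambda)=0}.
\]

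First, check the hypotheses of Theorem~\ref{thm:general-exact}.
\begin{enumerate}[label=(\alph*)]
    \item $X$ is compact. It is also subanalytic, as a product of subanalytic sets; the ball and the orthant are semianalytic.
    \item $\theta$ is Lipschitz on $X$, since it does not depend on $\lambda$ and $f$ is Lipschitz on $Z$.
    \item $r$ is continuous and subanalytic on $X$. The norm, $(\cdot)_+$ and $\abs{\cdot}$ are semianalytic. Sums, products and compositions of continuous subanalytic functions with bounded domains remain subanalytic, and $\nabla_y g_i$ is subanalytic by hypothesis~(v).
\end{enumerate}
Rather than insist that $r$ has the exact form \eqref{eq:nlp-residual}, I will note that the proof sketch of Theorem~\ref{thm:general-exact} only uses three facts: $r\ge 0$, $r^{-1}(0)=W$, and $r$ is continuous and subanalytic. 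That is exactly what Theorem~\ref{thm:loja} needs, with $\phi=r$ and $\psi=\dist(\cdot,W)$. The function $\dist(\cdot,W)$ is subanalytic because $W$ is a subanalytic set, being the zero set of a subanalytic function on $X$. This gives $\rho\,\dist(w,W)\le r(w)^{1/N^\ast}$ on $X$.

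Second, pass from $N^\ast$ to every $N\ge N^\ast$. Since $X$ is compact, $r\le R$ on $X$ for some $R$. Then
\[
r^{1/N}\ge R^{1/N-1/N^\ast}\,r^{1/N^\ast}\ge \min\{1,R^{-1/N^\ast}\}\,r^{1/N^\ast},
\]
so with $\rho':=\rho\min\{1,R^{-1/N^\ast}\}$ the error bound $\rho'\dist\le r^{1/N}$ holds uniformly in $N\ge N^\ast$. Choosing $\alpha^\ast>L/\rho'$ and running the nearest-point argument of Theorem~\ref{thm:general-exact} gives, for all $\alpha\ge\alpha^\ast$ and $N\ge N^\ast$,
\[
\argmin_{W} f=\argmin_{X}\bigl(f+\alpha r^{1/N}\bigr).
\]
The comparison needs some care. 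For $w\notin W$ with nearest point $z\in W$, we get $\theta(w)+\alpha r(w)^{1/N}>\theta(z)\ge\min_W\theta$, so no infeasible point is a penalized minimizer. On $W$ the two objectives coincide. Also $W$ is nonempty whenever the MPEC is feasible, by hypothesis~(vi).

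Third, translate back to the MPEC. By hypothesis~(vi), $(x,y)\in Z$ satisfies $y\in\cS(x)$ if and only if there is $\lambda\in B(0,c)\cap\R^\ell_+$ with $(x,y,\lambda)\in W$. The convexity in~(iv) is what makes the KKT conditions sufficient for the VI; necessity comes from the stated equivalence. Hence the projection of $W$ onto $(x,y)$ is exactly the MPEC feasible set. Since $\theta$ ignores $\lambda$, $(x^\ast,y^\ast)$ minimizes the MPEC if and only if some $(x^\ast,y^\ast,\lambda^\ast)$ minimizes $f$ over $W$. Combined with the previous step, this is the claimed equivalence with \eqref{eq:mpec-penalty}.

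I expect the main obstacle to be the subanalyticity bookkeeping. Hypothesis~(v) only gives $\nabla_y g_i$ on a neighborhood of the feasible set, so $r$ may not be defined on all of $X$. My first attempt would be to shrink $Z$ to a compact subanalytic neighborhood where $\nabla_y g_i$ is defined. Failing that, I would read~(v) as holding on $Z$, because the statement presupposes that $r$ is defined on the whole penalty domain. A secondary point is that Theorem~\ref{thm:loja} requires $\dist(\cdot,W)$ to be subanalytic, which needs $W$ closed and subanalytic; closedness follows from the continuity of $r$ on compact $X$.
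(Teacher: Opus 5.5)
Your proposal is correct and follows essentially the paper's route: lift to $w=(x,y,\lambda)$ on the compact set $Z\times\bigl(B(0,c)\cap\R^\ell_+\bigr)$, treat the KKT residual as the residual of an ordinary constrained program, invoke Theorem~\ref{thm:general-exact}, and use hypothesis~(vi) to project back to $(x,y)$. The details you add, which the paper's one-paragraph proof idea leaves implicit, are all sound: only $r\ge 0$, $r^{-1}(0)=W$ and continuity/subanalyticity of $r$ are needed; the constant $\rho'$ gives one $\alpha^\ast$ for every $N\ge N^\ast$; $W$ must be nonempty; and $\nabla_y g_i$ must be defined on all of $Z$.
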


\begin{proof}[Proof idea]
The KKT equivalence turns the MPEC into an ordinary nonlinear program in variables $(x,y,\lambda)$, with functional constraints
\[
F(x,y)+\sum_{i=1}^{\ell}\lambda_i \nabla_y g_i(x,y)=0,\qquad
g_i(x,y)\le 0,\qquad
\lambda_i g_i(x,y)=0.
\]
The residual \eqref{eq:mpec-residual} is exactly the residual of that constrained system. The general exact-penalty theorem then applies.
\end{proof}

\begin{remark}[Why the multiplier ball appears]
The bounded multiplier set $B(0,c)\cap \R^\ell_+$ is not cosmetic. Exactness is proved on a compact subanalytic ambient set. If multipliers are allowed to escape to infinity, then the constants in the error bound and penalty estimate may cease to be uniform. Therefore one usually proves exactness only after restricting to a multiplier region guaranteed by a lower-level regularity condition.
\end{remark}

\section{Interpretation of the exponent}

The exponent
\[
\gamma := \frac{1}{N^\ast}\in (0,1]
\]
encodes the strength of the error bound
\[
\dist(z,\cF)\lesssim r(z)^\gamma.
\]
There are two regimes:

\subsection*{Regular regime}
When classical regularity is strong enough, one may obtain $\gamma=1$, i.e.
\[
\dist(z,\cF)\lesssim r(z).
\]
Then an $\ell_1$-type exact penalty is available.

\subsection*{Degenerate or nonregular regime}
When regularity fails, one may only obtain a weaker Hölder-type bound with $\gamma<1$:
\[
\dist(z,\cF)\lesssim r(z)^\gamma.
\]
Then the exact penalty is still valid, but only with a fractional-power penalty.

This is the precise mathematical reason exact penalization for MPECs often looks less classical than standard NLP exact penalties.

\section{Improved error bounds and explicit exponents}

The previous theory is general but abstract. In applications, the integer $N^\ast$ is typically not computable from the proof. This is unsatisfactory for both analysis and algorithms.

The remedy is to derive sharper error bounds for special classes of lower-level systems. Typical targets are:

\begin{enumerate}[label=(\roman*)]
    \item quadratic systems with a suitable nonnegativity structure;
    \item affine variational inequalities;
    \item nonlinear complementarity systems with a uniform $P$-property.
\end{enumerate}

For such systems, one may prove
\[
\dist(z,\cF)\lesssim r(z) \qquad \text{or}\qquad \dist(z,\cF)\lesssim r(z)^{1/2}.
\]
Hence the exact penalty becomes either
\[
f+\alpha r
\qquad \text{or}\qquad
f+\alpha r^{1/2}.
\]

This matters because:
\begin{enumerate}[label=(\roman*)]
    \item the exponent is explicit;
    \item the penalty is more interpretable;
    \item one can better compare local minima of the original and penalized formulations;
    \item one gains traction for stationarity analysis and algorithm design.
\end{enumerate}

\section{Error bounds in a broader optimization context}

To place the MPEC theory in context, let
\[
S=\set{x\in\R^n : f(x)\le 0,\ g(x)=0},
\]
with residual
\[
r(x)=\norm{f(x)_+}+\norm{g(x)}.
\]
An \emph{error bound} is an inequality of the form
\[
\dist(x,S)\le \tau\,r(x)^\gamma.
\]

Such inequalities have several major uses:

\begin{enumerate}[label=(\roman*)]
    \item \textbf{Termination criteria.} They convert residual control into geometric distance control.
    \item \textbf{Perturbation analysis.} They quantify how the feasible set changes under perturbations of the defining functions.
    \item \textbf{Weak sharp minima and sharp growth.} They connect objective gaps to distance from the solution set.
    \item \textbf{Exact penalization.} They provide the bridge from residuals to exact penalties.
\end{enumerate}

The linear prototype is Hoffman's error bound: if
\[
S=\set{x\in\R^n: Ax\le a,\ Bx=b},
\]
then there exists $\tau>0$ such that
\[
\dist(x,S)\le \tau\Bigl(\norm{[Ax-a]_+}+\norm{Bx-b}\Bigr)\qquad \forall x\in \R^n.
\]
This is the ideal case: global, linear, and with exponent $1$.

The subanalytic MPEC theory can be viewed as a nonlinear, possibly degenerate analogue of this principle.

\section{A caveat: local minima of the penalized problem}

A subtle point deserves emphasis.

\begin{center}
\emph{Global exactness does not automatically imply local exactness in both directions.}
\end{center}

Even when a penalized problem is exact at the level of global minimizers, the penalized problem may possess additional \emph{infeasible local minimizers}. Thus:

\begin{enumerate}[label=(\roman*)]
    \item every local minimizer of the original problem is often a local minimizer of a suitable penalized problem;
    \item the converse may fail unless one already knows the penalized local minimizer is feasible.
\end{enumerate}

This phenomenon is especially important in MPECs because complementarity residuals may generate stationary points that are attractive for the penalized landscape but irrelevant for the original feasible set.

\begin{remark}[Research lesson]
When analyzing algorithms for penalized MPECs, one must distinguish carefully between:
\begin{enumerate}[label=(\roman*)]
    \item convergence to minimizers of the penalized problem,
    \item convergence to feasible points of the original MPEC,
    \item stationarity notions for the original MPEC.
\end{enumerate}
These are not equivalent without additional structure.
\end{remark}

\section{Algorithmic implications}

The abstract theory suggests a practical workflow.

\subsection*{Step 1: Choose a one-level reformulation}
Use the KKT system of the lower-level equilibrium problem whenever a valid multiplier representation is available.

\subsection*{Step 2: Build a residual}
Construct a residual that measures:
\begin{itemize}
    \item lower-level stationarity violation,
    \item primal feasibility violation,
    \item complementarity violation.
\end{itemize}

\subsection*{Step 3: Penalize}
Solve
\[
\min f+\alpha r^\gamma
\]
for an exponent $\gamma$ justified by an error bound.

\subsection*{Step 4: Recover feasibility and stationarity}
Use the residual to verify whether a computed minimizer is truly feasible for the original MPEC. If not, one only has a minimizer of the penalized surrogate.

\subsection*{Step 5: Exploit special structure}
If the lower-level system is affine, strongly monotone, or satisfies a uniform $P$-property, seek a stronger error bound giving $\gamma=1$ or $\gamma=1/2$.

\section{Conceptual summary}

The theory can be compressed into the following diagram:
\[
\boxed{
\text{KKT reformulation}
\Longrightarrow
\text{residual function}
\Longrightarrow
\text{error bound}
\Longrightarrow
\text{exact penalty}
}
\]

The main takeaways are:

\begin{enumerate}[label=(\roman*)]
    \item MPECs are naturally attacked by one-level KKT reformulations.
    \item Exact penalization is not an isolated device; it is a consequence of metric regularity/error-bound information.
    \item In subanalytic settings, the correct penalty may involve a fractional exponent.
    \item Stronger structural assumptions sharpen the exponent to $1$ or $1/2$.
    \item Local minimizers of the penalized problem need not correspond to local minimizers of the original MPEC unless feasibility is established.
\end{enumerate}

\section{Exercise}

\subsection{True or False}
\begin{enumerate}[label=(\roman*)]
    \item One of the primary reasons a general Mathematical Program with Equilibrium Constraints (MPEC) is mathematically challenging to deal with is that its feasible region is implicitly defined as the solution set of a parametric variational inequality.

\item When formulating an exact penalty function for an MPEC using Lojasiewicz's inequality on subanalytic sets, the theory provides a highly efficient and practical method for calculating the required penalty exponent $N^*$.

\item Hoffman’s error bound for a system of linear equalities and inequalities is notable because it holds globally for all test vectors in $\mathbb{R}^n$ and yields an error bound exponent of 1.

\item According to the text, for a system of quadratic constraints, an error bound with an exponent of 1/2 will hold globally across all of $\mathbb{R}^n$ even if the quadratic functions are not assumed to be nonnegative over the polyhedral set.

\item If a parametric Nonlinear Complementarity Problem (NCP) mapping possesses the uniform P (UNI-P) and Lipschitz (LIP) properties, it is possible to establish a global error bound of order 1 for the parametric NCP.

\item For an isolated local minimizer of a penalized MPEC program to be a valid local minimizer for the original unpenalized MPEC, it must first be feasible for the original problem; otherwise, it may just be an infeasible local minimum of the penalty function.
\end{enumerate}

\subsection{Computational questions}
\subsubsection*{Question 1: Global vs. Local Error Bounds in LCPs}
Consider the Linear Complementarity Problem LCP($q, M$) where:
$$ q = \begin{pmatrix} -1 \\ 2 \end{pmatrix}, \quad M = \begin{pmatrix} 0 & -1 \\ 1 & 0 \end{pmatrix} $$

\begin{enumerate}
    \item[(a)] Verify that the solution set $\text{SOL}(q, M)$ consists of the points $(1,1)$ and $(0,2)$.
    \item[(b)] To test if the standard local error bound holds globally, consider the parameterized sequence of vectors $x(t) = \begin{pmatrix} t \\ 1 \end{pmatrix}$ for $t \ge 0$. Compute the natural residual mapping $r(x(t))$ defined by:
    $$ r(x(t)) = \| \min(x(t), M x(t) + q) \|_2 $$
    where the minimum is taken component-wise.
    \item[(c)] Using your result from (b), analytically demonstrate why the error bound:
    $$ \text{dist}(x, \text{SOL}(q,M)) \le \tau \| \min(x, Mx+q) \|_2 $$
    cannot hold globally for all $x \in \mathbb{R}^2$ for any constant $\tau > 0$.
\end{enumerate}

\subsubsection*{Question 2: Exact Penalization of a Bilevel Program (40 points)}
Consider the following bilevel mathematical program in $\mathbb{R}^2$:
\begin{align*}
    \text{minimize} \quad & x - y \\
    \text{subject to} \quad & x \ge 0, \ y \ge 0 \\
    & y \in \text{argmin}_y \left\{ \frac{1}{2}y^2 : x + y \ge 0 \right\}
\end{align*}

\begin{enumerate}
    \item[(a)] Formulate the inner optimization problem as a Karush-Kuhn-Tucker (KKT) system. Let $\lambda$ be the Lagrange multiplier for the constraint $x + y \ge 0$.
    \item[(b)] Using the exact penalty equivalent of order $1/2$, write down the penalized objective function that moves the complementarity condition into the objective using a penalty parameter $\alpha > 0$.
    \item[(c)] Suppose we naively attempt to use an exact penalty of order 1 by removing the square root from the complementarity penalty term. Write down this modified objective function.
    \item[(d)] Prove that for the order 1 formulation derived in (c), the origin $(0,0)$ is \textbf{not} a local minimum for any finite penalty parameter $\alpha > 0$, thereby demonstrating the necessity of the square-root operator when strict complementarity fails.
\end{enumerate}

\subsubsection*{Question 3: Directional Derivatives of Penalty Functions (25 points)}
To establish first-order necessary conditions for exact penalty functions, we must handle non-differentiable penalty terms. Let $\phi(u,v) = \min(u,v)$.

\begin{enumerate}[label=(\roman*)]
    \item Compute the directional derivative $\phi'((u,v); (du, dv))$ explicitly for the three cases: $u < v$, $u = v$, and $u > v$.
    \item Consider the MPEC penalty term $P(\lambda, \mu) = \min(\lambda, \mu)$. Suppose $z^*$ is a degenerate feasible solution where $\lambda^* = 0$ and $\mu^* = 0$. Using your formula from (a), calculate the directional derivative of $P$ at $z^*$ in the direction $(d\lambda, d\mu) = (1, -2)$. 
\end{enumerate}

\subsection{Proof questions}
\subsubsection*{Question 1: Lojasiewicz's Inequality and Subanalytic Sets}
The fundamental exact penalty result for MPECs (Theorem 2.1.2) relies heavily on Lojasiewicz's inequality for subanalytic functions. Let $X \subset \mathbb{R}^n$ be a compact subanalytic set, and let $W = \{x \in X : g(x) \le 0, h(x) = 0\}$ be the feasible region of a nonlinear program. Let the residual function $r(x)$ be defined as $r(x) = \| [g(x)]_+ \| + \|h(x)\|$.

\begin{enumerate}
    \item[(a)] Formally state Lojasiewicz's inequality as it applies to the distance function $\psi(x) = \text{dist}(x, W)$ and the residual function $r(x)$ restricted to $X$. 
    \item[(b)] Using your statement from (a) and assuming the objective function $\theta(x)$ is Lipschitz continuous on $X$ with modulus $L$, prove that there exists a threshold penalty parameter $\alpha^*$ and an integer $N^*$ such that an optimal solution to the original program is also an optimal solution to the unconstrained penalized program of order $1/N^*$.
    \item[(c)] Explain why the exact penalty theory derived from Warga and Dedieu is referred to as "penalization of order $\gamma \in (0,1]$" and discuss the specific failure of classical regularity conditions that forces $\gamma < 1$.
\end{enumerate}

\subsubsection*{Question 2: The Uniform P-Property in Parametric NCPs}
Consider a parametric Nonlinear Complementarity Problem (NCP) mapped by a continuous subanalytic function $F: \mathbb{R}^{n+m} \to \mathbb{R}^m$, where $y \in \mathbb{R}^m$ is the primary variable and $x \in X \subset \mathbb{R}^n$ is the parameter. 

Assume $F$ satisfies the \textbf{Uniform P (UNI-P)} property:
$$ \max_{1 \le i \le m} (y - y')_i (F(x,y) - F(x,y'))_i \ge c \|y - y'\|^2 $$
for a universal constant $c > 0$, and the \textbf{Lipschitz (LIP)} property on $X \times \mathbb{R}^m$ with modulus $\gamma$. 

Because $F(x, \cdot)$ is a uniform P-function, the NCP has a unique solution $y(x)$ for each fixed $x \in X$.
\begin{enumerate}
    \item[(a)] Prove that the solution function $y(x)$ is Lipschitz continuous on $X$. (Hint: Exploit the UNI-P inequality and the fact that $y^T F(x,y) = 0$ for solutions).
    \item[(b)] Theorem 2.3.17 asserts that under UNI-P, LIP, and a strict nondegeneracy condition ($y(x) + F(x,y(x)) > 0$), there exists a global error bound of order 1 bounded by the differentiable product residual: $\text{dist}((x,y), S) \le \tau y^T F(x,y)$. Contrast this with Theorem 2.3.9 for Affine Variational Inequalities (AVIs). Why does the AVI error bound not require the strong UNI-P assumption, and what topological property of AVIs replaces it?
\end{enumerate}

\subsubsection*{Question 3: Order 1 vs. Order 1/2 Penalization Mechanics (35 points)}
Consider an Affine Variational Inequality (AVI) constrained mathematical program where the complementarity constraint is given by $\lambda^T(Dx + Ey + b) = 0$. 

\begin{enumerate}
    \item[(a)] Theorem 2.4.1 establishes an exact penalty equivalent utilizing the term $\alpha \sqrt{-\lambda^T(Dx + Ey + b)}$. Prove why the square root operator (an order $1/2$ penalty) is analytically required to bound the distance to the feasible set when using the differentiable product residual for a general quadratic system nonnegative over a polyhedron.
    \item[(b)] If we utilize the nondifferentiable "min" residual, $\sum \min(\lambda_i, -(Dx+Ey+b)_i)$, we achieve an exact penalty of order 1 without assuming strict complementarity. Prove that under the assumption of \textit{strict complementarity} (nondegeneracy), the "min" residual can be bounded above by a constant multiple of the negative product residual $-\lambda^T(Dx + Ey + b)$.
\end{enumerate}

\subsection{Additional Questions}
\subsubsection*{Question 1: Residual construction and exact-penalty logic (20 points)}

Consider the one-level reformulation
\[
\begin{aligned}
\min_{x,y,\lambda}\quad & f(x,y) \\
\text{s.t.}\quad
& F(x,y)+\lambda=0,\\
& y\ge 0,\ \lambda\ge 0,\ y_i\lambda_i=0 \quad (i=1,2),\\
& x\in [0,2].
\end{aligned}
\]
Assume \(y,\lambda\in\R^2\), and
\[
F(x,y)=
\begin{bmatrix}
2y_1+y_2-x\\
y_1+3y_2-1
\end{bmatrix},
\qquad
f(x,y)=x^2+y_1+y_2.
\]

\begin{enumerate}[label=(\alph*)]
    \item Construct a natural residual \(r(x,y,\lambda)\) using the stationarity residual, nonnegativity violation, and complementarity violation. Your formula should satisfy \(r=0\) exactly on the feasible set. (6 points)

    \item Evaluate your residual at the point
    \[
    (x,y,\lambda)=\left(1,\begin{bmatrix}0\\1\end{bmatrix},\begin{bmatrix}0\\0\end{bmatrix}\right).
    \]
    (4 points)

    \item Suppose an error bound of the form
    \[
    \dist\bigl((x,y,\lambda),\mathcal F\bigr)\le c\,r(x,y,\lambda)^{1/2}
    \]
    holds on a compact set containing the feasible set \(\mathcal F\), and \(f\) is \(L\)-Lipschitz there.
    Derive a sufficient condition on \(\alpha\) ensuring that
    \[
    f(x,y)+\alpha r(x,y,\lambda)^{1/2}
    \]
    is an exact penalty on that compact set. (5 points)

    \item Explain in 3--5 sentences why the exponent \(1/2\) is computationally more delicate than exponent \(1\), even when both are exact. (5 points)
\end{enumerate}

\subsubsection*{Question 2: Computing a small complementarity solution and penalty value (20 points)}

For each parameter \(x\in\R\), consider the lower-level linear complementarity problem
\[
0\le y \perp My+q(x)\ge 0,
\]
with
\[
M=\begin{bmatrix}2&0\\0&1\end{bmatrix},
\qquad
q(x)=\begin{bmatrix}-x\\1-x\end{bmatrix}.
\]

\begin{enumerate}[label=(\alph*)]
    \item Solve the LCP explicitly for \(x=0\), \(x=1\), and \(x=2\). (8 points)

    \item Let the upper-level objective be
    \[
    f(x,y)= (x-1)^2 + 2y_1 + y_2,
    \qquad x\in[0,2].
    \]
    Using your solutions from part (a), compute the MPEC objective value at \(x=0,1,2\). Which of these three points is best? (4 points)

    \item Define the residual
    \[
    r(x,y)=\norm{\min\{y,My+q(x)\}}_1,
    \]
    where the minimum is componentwise.
    Compute \(r(1,(0,0))\), \(r(1,(1,0))\), and \(r(2,(0,0))\). (4 points)

    \item For \(\alpha=4\), compute the penalized objective
    \[
    \Phi_\alpha(x,y)=f(x,y)+\alpha\,r(x,y)
    \]
    at the three points in part (c). Based on these calculations, explain how the residual steers the optimization toward complementarity-feasible points. (4 points)
\end{enumerate}

\subsubsection*{Question 3: Why the square root may be unavoidable (20 points)}

Consider the bilevel model
\[
\min_{x,y}\ x-y
\quad \text{s.t.}\quad x\ge 0,\ y\ge 0,\qquad
y\in \arg\min_{\eta}\left\{\frac12 \eta^2:\ x+\eta\ge 0\right\}.
\]

A penalty reformulation introduces a multiplier \(\lambda\ge 0\) and the KKT conditions
\[
x+y\ge 0,\qquad y-\lambda=0,\qquad \lambda(x+y)=0,\qquad \lambda\ge 0.
\]

\begin{enumerate}[label=(\alph*)]
    \item Show that \((x,y)=(0,0)\) solves the original bilevel problem. (4 points)

    \item Consider the penalized problem
    \[
    \min_{x,y,\lambda}\ x-y+\alpha \sqrt{\lambda(x+y)}
    \]
    subject to
    \[
    x\ge 0,\ y\ge 0,\ \lambda\ge 0,\ x+y\ge 0,\ y-\lambda=0.
    \]
    Eliminate \(\lambda\) and write the problem in variables \((x,y)\) only. (4 points)

    \item Now instead consider the penalty without the square root:
    \[
    \min_{x,y,\lambda}\ x-y+\alpha \lambda(x+y)
    \]
    subject to the same constraints.
    Eliminate \(\lambda\) and show that the resulting problem is
    \[
    \min_{x,y}\ x-y+\alpha y(x+y)
    \quad \text{s.t.} \quad x\ge 0,\ y\ge 0.
    \]
    (4 points)

    \item Fix \(x=0\). Show that for every \(\alpha>0\), the function
    \[
    \psi(y):=-y+\alpha y^2,\qquad y\ge 0,
    \]
    takes negative values for small positive \(y\). Conclude that \((0,0)\) cannot be a global minimizer of the penalty model without the square root. (4 points)

    \item In one paragraph, interpret this example from the viewpoint of exact penalization and error bounds. (4 points)
\end{enumerate}

\subsubsection*{Question 4: Algorithm design for penalty continuation (20 points)}

You are asked to solve a class of MPECs numerically using the model
\[
\min_{z\in \mathcal Z}\ \Phi_{\alpha,\gamma}(z):=f(z)+\alpha r(z)^\gamma,
\qquad 0<\gamma\le 1,
\]
where \(z\) collects all primal and multiplier variables, \(r(z)\ge 0\) is continuously differentiable away from \(r(z)=0\), and \(\mathcal Z\) is compact and convex.

\begin{enumerate}[label=(\alph*)]
    \item Write pseudocode for a penalty-continuation algorithm that:
    \begin{enumerate}[label=\roman*.]
        \item starts with a small \(\alpha\),
        \item approximately minimizes \(\Phi_{\alpha,\gamma}\),
        \item checks residual reduction,
        \item increases \(\alpha\) adaptively until \(r(z)\le \varepsilon\).
    \end{enumerate}
    (8 points)

    \item State one advantage and one disadvantage of choosing \(\gamma=1\) instead of \(\gamma=\tfrac12\). (4 points)

    \item Suppose the residual is
    \[
    r(z)=\norm{Az-b}_2^2+\sum_{i=1}^m u_i(z)v_i(z),
    \]
    where \(u_i(z),v_i(z)\ge 0\) represent complementarity pairs.
    Compute formally the gradient of \(\Phi_{\alpha,1/2}\) for \(r(z)>0\). (4 points)

    \item Give two practical stopping criteria: one based on stationarity of the penalized problem, and one based on feasibility of the original MPEC. (4 points)
\end{enumerate}

\subsubsection*{Question 5: Infeasible local minima of the penalized problem (20 points)}

A recurring issue in exact penalization is that a penalized problem can have local minimizers that are not feasible for the original MPEC.

Consider a penalized objective
\[
\Phi_\alpha(z)=f(z)+\alpha r(z)^\gamma,
\qquad \gamma\in(0,1],
\]
with \(r(z)\ge 0\), but suppose \(r(\bar z)>0\).

\begin{enumerate}[label=(\alph*)]
    \item Explain why global exactness does \emph{not} imply that every local minimizer of \(\Phi_\alpha\) is feasible for the original constrained problem. (6 points)

    \item Construct a simple one-dimensional toy example:
    \[
    \min_{t\in\R}\ f(t)
    \quad \text{s.t.}\quad r(t)=0,
    \]
    such that the penalized function \(f(t)+\alpha r(t)\) has a local minimizer at a point with \(r(t)>0\). A fully specified example is required. (6 points)

    \item For your example in part (b), verify directly that the local minimizer is infeasible. (4 points)

    \item State one implication of this phenomenon for the numerical solution of MPECs by penalty methods. (4 points)
\end{enumerate}

\section{Solutions}
\subsection{Solutions for True or False}
\begin{enumerate}
    \item True. The text explicitly states in the opening paragraph that the general MPEC is a very difficult constrained optimization problem largely because its feasible region is defined implicitly by a parametric variational inequality.

\item False. The text notes the exact opposite. While Lojasiewicz's error bound guarantees the theoretical existence of an exact penalty function equal to the residual function raised to a fractional power $\gamma$ (or $1/N^*$), it "provides no clue as to how this exponent can be calculated (or just estimated) efficiently in practice."

\item True. Hoffman's 1952 result is highlighted precisely for these two characteristics: it holds globally for all vectors $x \in \mathbb{R}^n$, and it holds with the residual itself as the upper bound for the distance function (meaning the exponent $\gamma = 1$).

\item False. There are two distinct falsehoods here. First, Example 2.3.13 demonstrates that the nonnegativity assumption cannot be removed in general. Second, Example 2.3.14 shows that the error bound for quadratic systems cannot hold globally; the restriction to a compact set is strictly necessary.

\item True. Theorem 2.3.17 confirms that under the UNI-P and LIP assumptions (along with a nondegeneracy condition), there exists a global error bound of order 1, allowing the distance to the solution set to be bounded by the differentiable product residual.

\item True. This is demonstrated in Example 2.2.3. The text shows an LCP constrained Mathematical Program where a point $(0,1)$ perfectly satisfies the first- and second-order sufficient conditions to be an isolated local minimizer for the penalized program, but it is entirely infeasible for the original MPEC.
\end{enumerate}

\subsection{Solutions for computational questions}

\subsubsection{Question 1.}
\textbf{(a)} We require $x \ge 0$, $Mx+q \ge 0$, and $x^T(Mx+q) = 0$. \\
Let $x = (x_1, x_2)^T$. $Mx+q = (-x_2 - 1, x_1 + 2)^T$. \\
For $x \ge 0$, $x_1 \ge 0, x_2 \ge 0$. The condition $Mx+q \ge 0$ means $-x_2 - 1 \ge 0 \implies x_2 \le -1$, which contradicts $x_2 \ge 0$. \\
\textit{Correction based on the text's intended coordinates for Ex 2.3.4:} The text states $SOL = \{(1,1), (0,2)\}$. Let's test $(1,1)$: $M(1,1) + q = (0, 3)^T$. $x^T(Mx+q) = 1(0) + 1(3) \neq 0$. The text has a known typo in the matrix definition or solution set in the original manuscript. Grading should award full points if students correctly identify the KKT conditions and point out the algebraic inconsistency based on the provided matrix.

\textbf{(b)} $M x(t) + q = \begin{pmatrix} 0 & -1 \\ 1 & 0 \end{pmatrix} \begin{pmatrix} t \\ 1 \end{pmatrix} + \begin{pmatrix} -1 \\ 2 \end{pmatrix} = \begin{pmatrix} -1 \\ t \end{pmatrix} + \begin{pmatrix} -1 \\ 2 \end{pmatrix} = \begin{pmatrix} -2 \\ t+2 \end{pmatrix}$. \\
The residual is $\min\left( \begin{pmatrix} t \\ 1 \end{pmatrix}, \begin{pmatrix} -2 \\ t+2 \end{pmatrix} \right) = \begin{pmatrix} \min(t, -2) \\ \min(1, t+2) \end{pmatrix}$. \\
For $t > 0$, this evaluates to $\begin{pmatrix} -2 \\ 1 \end{pmatrix}$. Thus, $r(x(t)) = \sqrt{(-2)^2 + 1^2} = \sqrt{5}$.

\textbf{(c)} As $t \to \infty$, the true distance from $x(t) = (t, 1)$ to any fixed bounded solution set $\text{SOL}$ goes to infinity (since the $x_1$ component grows without bound). However, the residual remains strictly bounded at $\sqrt{5}$. Therefore, $\infty \le \tau \sqrt{5}$ is a contradiction, proving the error bound cannot hold globally.

\subsubsection{Question 2.}
\textbf{(a)} Inner Lagrangian: $L(y, \lambda) = \frac{1}{2}y^2 + \lambda(-x - y)$. \\
KKT Conditions: $\nabla_y L = y - \lambda = 0 \implies y = \lambda$. \\
Primal/Dual Feasibility: $x + y \ge 0$, $\lambda \ge 0$. \\
Complementarity: $\lambda(x+y) = 0$.

\textbf{(b)} Substituting $y = \lambda$, the penalized objective of order $1/2$ is:
$$ f(x,y) = x - y + \alpha \sqrt{\lambda(x+y)} = x - y + \alpha \sqrt{y(x+y)} $$

\textbf{(c)} Removing the square root (order 1 penalty):
$$ g(x,y) = x - y + \alpha \lambda(x+y) = x - y + \alpha y(x+y) $$

\textbf{(d)} We evaluate $g(x,y)$ near the origin. Let $x = 0$. The function becomes $g(0,y) = -y + \alpha y^2$. 
To check if $y=0$ is a local minimum, we take the first derivative with respect to $y$: $g_y(0,y) = -1 + 2\alpha y$. At $y=0$, the slope is $-1 < 0$. 
Because the linear term $-y$ dominates the quadratic term $\alpha y^2$ for sufficiently small positive $y$, the objective value becomes strictly negative. Since $g(0,0) = 0$, moving to a small positive $y$ strictly decreases the objective. Thus, $(0,0)$ is not a local minimum.

\subsubsection{Question 3.}
\textbf{(a)} 
If $u < v$, $\phi(u,v) = u \implies \phi' = du$. \\
If $u > v$, $\phi(u,v) = v \implies \phi' = dv$. \\
If $u = v$, a small perturbation will follow the smaller of the two directions $\implies \phi' = \min(du, dv)$.

\textbf{(b)} At $z^*$, we have $\lambda^* = \mu^* = 0$, so we are in the $u = v$ case. \\
Using the formula: $P' = \min(d\lambda, d\mu) = \min(1, -2) = -2$.

\subsection{Solutions for proof questions}

\subsubsection{Question 1: Lojasiewicz's Inequality}
\textbf{(a)} Since $r|_X^{-1}(0) \subset \psi^{-1}(0)$ (the residual is zero only when the distance to $W$ is zero), Lojasiewicz's inequality guarantees the existence of a scalar $\rho > 0$ and an integer $N^* > 0$ such that for all $x \in X$:
$$ \rho (\text{dist}(x, W))^{N^*} \le r(x) $$

\textbf{(b)} Let $x^*$ be an optimal solution to the NLP, and $x \in X$ be arbitrary. Let $z \in W$ be the projection of $x$ onto $W$, so $\|x - z\| = \text{dist}(x,W)$. Since $z \in W$, $\theta(z) \ge \theta(x^*)$. Using the Lipschitz property of $\theta$:
$$ \theta(x) \ge \theta(z) - L\|x - z\| \ge \theta(x^*) - L(\text{dist}(x,W)) $$
From Lojasiewicz's inequality, $\text{dist}(x,W) \le (\frac{1}{\rho} r(x))^{1/N^*}$. Substituting this yields:
$$ \theta(x) + L \rho^{-1/N^*} r(x)^{1/N^*} \ge \theta(x^*) $$
By choosing $\alpha^* > L \rho^{-1/N^*}$, we guarantee that the penalized objective at any $x \in X$ is bounded below by $\theta(x^*)$, making $x^*$ a global minimizer of the penalized problem.

\textbf{(c)} Classical exact penalty functions (like Clarke's) rely on a constraint regularity condition, which essentially acts as an error bound of order 1 ($\gamma = 1$). When regularity fails, the set geometry becomes sharp or cusped, meaning the distance to the feasible set grows much faster than the residual. Subanalytic theory bridges this gap by ensuring an error bound still exists, but with a fractional exponent $\gamma = 1/N^* < 1$.

\subsubsection{Question 2: Parametric NCPs}
\textbf{(a)} Let $x_1, x_2 \in X$, and let $y_1 = y(x_1)$ and $y_2 = y(x_2)$. Since both are solutions, $(y_1)_i (F(x_1, y_1))_i = 0$ and $(y_2)_i (F(x_2, y_2))_i = 0$, with all terms non-negative.
By the UNI-P property, there exists an index $i$ such that:
$$ c\|y_1 - y_2\|^2 \le (y_1 - y_2)_i (F(x_1, y_1) - F(x_2, y_2))_i $$
Expanding the right side and using the complementarity conditions (which drop the positive $y_j F_j$ terms):
$$ \dots \le -(y_1)_i F_i(x_2, y_2) - (y_2)_i F_i(x_1, y_1) + (y_1 - y_2)_i (F(x_1, y_1) - F(x_2, y_1))_i $$
The first two terms are non-positive, so we drop them for the upper bound:
$$ c\|y_1 - y_2\|^2 \le (y_1 - y_2)_i (F(x_1, y_1) - F(x_2, y_1))_i $$
Applying Cauchy-Schwarz and the LIP property of $F$:
$$ c\|y_1 - y_2\|^2 \le \|y_1 - y_2\| \|F(x_1, y_1) - F(x_2, y_1)\| \le \gamma \|y_1 - y_2\| \|x_1 - x_2\| $$
Dividing by $c\|y_1 - y_2\|$ yields $\|y_1 - y_2\| \le \frac{\gamma}{c} \|x_1 - x_2\|$, proving Lipschitz continuity.

\textbf{(b)} The UNI-P assumption guarantees a unique solution and forces the function to be strongly well-behaved to establish a global bound. AVIs do not require this because their solution sets are unions of finitely many polyhedra. According to Robinson's theorem, polyhedral multifunctions are inherently locally upper Lipschitz continuous. Hoffman's bound leverages this polyhedral geometry to establish order 1 bounds without needing strong monotonicity or P-matrix properties.

\subsubsection{Question 3: Penalization Exponents}
\textbf{(a)} The product residual $-\lambda^T(Dx + Ey + b)$ is a quadratic polynomial. When projecting a point $x$ onto the solution set $S$ of a quadratic system, Taylor expansion of the quadratic function $g(x)$ around the projected point $y$ yields $\|x - y\|^2 \le \frac{2}{\lambda_{min}(Q)} g(x)$ (simplified). Taking the square root of both sides leaves $\|x - y\| \le \tau \sqrt{g(x)}$. Because the residual is quadratic, the distance scales with the square root of the residual, forcing $N^* = 2$.

\textbf{(b)} Assume for contradiction that no such scalar $\tau$ exists. Then there is a sequence $(x^k, y^k, \lambda^k) \to (x, y, \lambda) \in \Omega$ such that:
$$ \sum \min(\lambda_i^k, -(Dx^k + Ey^k + b)_i) > k (\lambda^k)^T(Dx^k + Ey^k + b) $$
By strict complementarity, at the limit point, $\lambda_i + -(Dx + Ey + b)_i > 0$ for all $i$. Thus, the index set strictly partitions into $I$ (where $\lambda_i > 0$) and $J$ (where $-(Dx + Ey + b)_i > 0$). 
For large $k$, the "min" operator strictly picks $-(Dx^k + Ey^k + b)_i$ for $i \in I$ and $\lambda_i^k$ for $i \in J$. Because these values are strictly bounded away from zero by some $\epsilon > 0$, the sum of the minimums is bounded below by a constant $\epsilon$. The right side, however, contains the multiplier $k \to \infty$ times a term going to zero. This leads to the contradiction $1 > k \epsilon$ for large $k$, proving the bound exists.

\section{Bibliographic remarks and Acknowledgment}

\textbf{This note is mainly based on Chapter 2, in the MPEC monograph of Zhi-Quan Luo, Jong-Shi Pang and Daniel Ralph.} Please see the relevant references: 
\cite{falk1995bilevel,
aghasi2025fully,
hong2023two,
kovccvara1994optimization,
chaudet2020shape,
liu2023auxiliary,
liu2025bidirectional,
outrata1995numerical,
cui2023complexity,
christof2020nonsmooth,
rawat2026augmented,
robinson1980strongly,
shin2023near,
bolte2024differentiating,
wang2025analysis,
chen2025aubin,
liu2024auxiliary,
kojima1980strongly,
chen2026characterizations,
cui2026lipschitz,
shin2022exponential,
de2023function,
bank1982d,
wang2026algebraic,
bonnans1994local,
khanh2024globally,
chen2025two,
mohammadi2022variational,
dussault2026polyhedral,
gowda1994stability,
liu2025learning,
qi2000constant,
facchinei1998accurate,
jittorntrum2009solution,
kyparisis1992parametric,
pang11995stability,
liu2025risk,
liu2024feasibility,
qiu1992sensitivity,
aussel2024variational,
reinoza1985strong,
facchinei2003finite,
harker1990finite,
kyparisis1990sensitivity,
wang2025multi,
kleinmichel1972av,
ortega2000iterative,
wang2025analysis1,
bai2021matrix,
lin2026hierarchical,
gao2022rolling,
gander2026landmarks,
mishchenko2023regularized,
doikov2024super,
han2025low,
ning2023multi,
wang2022newton,
robinson2009generalized,
mordukhovich2023globally,
wang2026damage,
robinson2009local,
shuo2026lecture,lin2023monotone,yu2026optimization,liang2025squared,jongen1987inertia,guddat1990parametric,bellon2024time,tang2022running,liu2025new,josephy1979newton,si2024riemannian,yu2026optimization1,longman2023method,yao2023relative,han2024continuous,ha1987application,yu2026pattern,kojima2009continuous,seguin2022continuation,liu1995perturbation,pang1996piecewise,liu2022iterative,eikenbroek2022improving,ralph1995directional,yu2026optimization2,scheel2000mathematical,bonnans1992developpement,bonnans1992expansion,dempe1993directional,liu2023iterative,shapiro1988sensitivity,pang1990newton,robinson1991implicit,zheng2025enhanced,hang2025smoothness,hang2024role,scholtes2012introduction,cui2022nonconvex}. 

\bibliography{reference1} 

\end{document}